\newtheorem{thm}{Theorem}[section]
\newtheorem{lem}[thm]{Lemma}
\newtheorem{conj}[thm]{Conjecture}
\theoremstyle{definition}
\newtheorem{dfn}[thm]{Definition}
\newtheorem{rem}[thm]{Remark}
\newtheorem{notation}[thm]{Notation}
\newcommand{\F}{\mathbb{F}}
\newcommand{\N}{\mathbb{N}}
\newcommand{\Q}{\mathbb{Q}}
\newcommand{\Z}{\mathbb{Z}}
\newcommand{\R}{\mathbb{R}}
\newcommand{\fl}[1]{\lfloor{#1}\rfloor}
\begin{document}

\title{A note on logarithmic growth Newton polygons of $p$-adic differential equations}
\author{Shun Ohkubo
\footnote{
Graduate School of Mathematical Sciences, The University of Tokyo, 3-8-1 Komaba Meguro-ku Tokyo 153-8914, Japan. E-mail address: shuno@ms.u-tokyo.ac.jp }
}
\date{\today}

\maketitle

\begin{abstract}
In this paper, we answer a question due to Y.~Andr\'e related to B.~Dwork's conjecture on a specialization of the logarithmic growth of solutions of $p$-adic linear differential equations. Precisely speaking, we explicitly construct a $\nabla$-module $M$ over $\Q_p[[X]]_0$ of rank~$2$ such that the left endpoint of the special log-growth Newton polygon of $M$ is strictly above the left endpoint of the generic log-growth Newton polygon of $M$.
\end{abstract}

\section{Introduction}

We consider an ordinary linear $p$-adic differential equation
\[
Dy=y^{(\mu)}+f_{\mu-1}y^{(\mu-1)}+\dots+f_0y=0,
\]
where the $f_i$'s are bounded analytic functions in the unit disc $|x|<1$, with coefficients in a $p$-adic field. We assume that the differential equation has a full set of solutions $y$ that are analytic in the unit disc. For example, this assumption is satisfied for Picard-Fuchs equations.

In \cite{Dwo}, B.~Dwork studied asymptotic behavior of the solutions around the boundary $|x|=1$ and he proved that $y$ has at most logarithmic growth (log-growth) of order $\mu-1$, that is,
\[
|y|_0(r)=O((\log{(1/r)})^{1-\mu})\text{ as }r\uparrow 1,
\]
where $|\cdot|_0(r)$ means the $r$-Gaussian norm with center $0$. To obtain more precise information about the log-growth of the solutions of $Dy=0$, he defined the log-growth Newton polygon $\mathrm{NP}_{\log,0}(D)$.

Then, Dwork made the following observations and stated two fundamental conjectures on log-growth Newton polygons (see \cite[Concluding Remark~3]{Dwo2} for details): He first defined the notion of a Frobenius structure for a $p$-adic differential equation. If $Dy=0$ admits a Frobenius structure, then the solution space of $Dy=0$ is endowed with a canonical Frobenius structure. Then, the associated Frobenius Newton polygon is called the special Frobenius Newton polygon $\mathrm{NP}_{\varphi,0}(D)$. If we pull back the unit disc to the unit disc around a generic point $t$ of the unit disc, then we can obtain a $p$-adic differential equation $D_ty=0$, which is defined on the disc $|X-t|<1$. Then, we can compute the log-growth Newton polygon associated to $D_ty=0$, which is called the generic log-growth Newton polygon $\mathrm{NP}_{\log,t}(D_t)$. Moreover, $D_ty=0$ is also endowed with a Frobenius structure, and the corresponding Frobenius Newton polygon is called the generic Frobenius Newton polygon $\mathrm{NP}_{\varphi,t}(D_t)$. Then, based on a calculation for the hypergeometric differential equation with parameters $(1/2,1/2;1)$, Dwork conjectured that
\begin{conj}\label{conj:1}
$\mathrm{NP}_{\log,0}(D)=\mathrm{NP}_{\varphi,0}(D)$ and $\mathrm{NP}_{\log,t}(D_t)=\mathrm{NP}_{\varphi,t}(D_t)$.
\end{conj}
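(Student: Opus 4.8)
The plan is to prove each equality by realizing both Newton polygons as the numerical invariants of a single filtration on the space of analytic solutions, with the Frobenius structure serving as the bridge between the two. Write $V$ for the solution space of $Dy=0$ (respectively of $D_ty=0$ in the generic case), a $\mu$-dimensional vector space over the appropriate $p$-adic field. On $V$ there are two natural structures: the exhaustive \emph{log-growth filtration} $V^{\le\lambda}=\{\,y\in V:\ |y|_0(r)=O((\log(1/r))^{-\lambda})\,\}$, whose jumps with multiplicity produce $\mathrm{NP}_{\log}$, and the $\sigma$-semilinear Frobenius operator $\varphi$, whose slope decomposition produces $\mathrm{NP}_{\varphi}$. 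The statement to be proved is that the break points of these two polygons coincide, so I would aim to show that the log-growth filtration \emph{is} the slope filtration attached to $\varphi$.

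The mechanism linking the two is the Frobenius functional equation. Since $\sigma$ is a Frobenius lift with $\sigma(x)\equiv x^{q}$, it contracts Gaussian radii as $r\mapsto r^{q}$, and for a solution $y$ lying in a one-dimensional $\varphi$-stable piece one has $\varphi(y)=\alpha\, y^{\sigma}$, whence the controlling estimate $|y|_0(r)\sim|\alpha|^{-1}|y|_0(r^{q})$. Iterating $n$ times gives $|y|_0(r)\sim|\alpha|^{-n}|y|_0(r^{q^{n}})$, and letting $n\to\infty$ as $r\uparrow1$ converts the Frobenius slope $v(\alpha)$ into a precise log-growth order; this is the rank-one heart of the argument. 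For higher rank I would pass to the slope filtration of $(V,\varphi)$, run the same estimate on each graded piece, and account for the unipotent blocks within a single slope, whose nilpotent monodromy is exactly what produces the powers of $\log(1/r)$ predicted by Dwork's bound of order $\mu-1$. Matching the resulting log-growth orders slope-by-slope with the Frobenius valuations would then give the equality of polygons.

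I expect the generic equality $\mathrm{NP}_{\log,t}(D_t)=\mathrm{NP}_{\varphi,t}(D_t)$ to be the more tractable of the two, because over the generic point the Frobenius slope filtration tends to split, so one can isolate each slope piece and compute its growth directly from the functional equation above. The main obstacle is the special equality $\mathrm{NP}_{\log,0}(D)=\mathrm{NP}_{\varphi,0}(D)$: at the origin the $\nabla$-module need not decompose according to Frobenius slopes, so one controls only the graded pieces of a filtration rather than a genuine direct sum, and the asymptotic estimate $|y|_0(r)\sim|\alpha|^{-n}|y|_0(r^{q^{n}})$ must be made uniform as $r\uparrow1$ without losing the lower-order terms that determine the endpoints of $\mathrm{NP}_{\log,0}$. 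This is precisely the regime in which the log-growth polygon can fail to specialize in step with the Frobenius polygon, so I would anticipate that the crux — and the place where any comparison of this kind is most fragile — is the behaviour of the left endpoint of the special log-growth polygon, exactly the invariant singled out in the abstract.
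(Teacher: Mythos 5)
There is a fundamental mismatch here: the statement you were asked to address is a \emph{conjecture}, and the paper contains no proof of it. As the introduction explains, Conjecture~\ref{conj:1} is Dwork's conjecture in the precise form given by Chiarellotto and Tsuzuki; at the time of writing, the generic equality $\mathrm{NP}_{\log,t}(D_t)=\mathrm{NP}_{\varphi,t}(D_t)$ had been proved in full (\cite[Theorem~7.1]{CT2}), while the special equality $\mathrm{NP}_{\log,0}(D)=\mathrm{NP}_{\varphi,0}(D)$ was known only for $\mu\le 2$ (\cite[Theorem~7.1~(2)]{CT}) or under the technical HPBQ hypothesis (\cite[Theorem~6.5]{CT2}). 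So your text cannot be compared against ``the paper's own proof''; it must stand on its own as an attempted proof of what was then an open problem, and it does not.

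The gap is concentrated exactly where you defer the difficulty. Your mechanism --- the functional equation $\varphi(y)=\alpha y^{\sigma}$ giving $|y|_0(r)\sim|\alpha|^{-n}|y|_0(r^{q^n})$, hence converting Frobenius slopes into log-growth orders --- is a reasonable heuristic for rank one and for split situations, and indeed something of this flavor underlies the generic-point theorem of \cite{CT2}. But at the special point the Frobenius slope filtration of the $\nabla$-module need not split, so your estimate only controls graded pieces; the extension classes between slope pieces can a priori alter the growth of actual horizontal sections, and ``making the estimate uniform as $r\uparrow 1$ without losing lower-order terms'' is not a technical refinement to be supplied later --- it \emph{is} the open content of the conjecture. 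Two further cautions: first, the correct dictionary in Chiarellotto--Tsuzuki's framework compares the log-growth filtration with the Frobenius slope filtration of the \emph{dual} module, with a shift, so even your slope-by-slope matching needs reformulation; second, the very paper you are reading shows how fragile the comparison is --- it constructs a rank-$2$ module with \emph{no} Frobenius structure whose special log-growth polygon has a jumping left endpoint, which demonstrates that no growth-theoretic argument that fails to use the Frobenius structure in an essential, global way (rather than graded-piece by graded-piece) can possibly close the special case.
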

\noindent Note that if Conjecture~\ref{conj:1} is true, then the special log-growth Newton polygon is above the generic log-growth Newton polygon by Grothendieck's specialization theorem for $F$-isocrystals. Thus, he also conjectured that
\begin{conj}\label{conj:2}
$\mathrm{NP}_{\log,0}(D)$ is above $\mathrm{NP}_{\log,t}(D_t)$.
\end{conj}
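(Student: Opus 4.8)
The plan is to establish this specialization statement either as a formal consequence of Conjecture~\ref{conj:1} or, unconditionally, as a direct semicontinuity property of the log-growth filtration. For the conditional route, the excerpt already records the mechanism: applying Conjecture~\ref{conj:1} at the special point $0$ and at the generic point $t$ gives $\mathrm{NP}_{\log,0}(D)=\mathrm{NP}_{\varphi,0}(D)$ and $\mathrm{NP}_{\log,t}(D_t)=\mathrm{NP}_{\varphi,t}(D_t)$, after which Grothendieck's specialization theorem for $F$-isocrystals yields that $\mathrm{NP}_{\varphi,0}(D)$ lies above $\mathrm{NP}_{\varphi,t}(D_t)$; chaining the three relations gives the claim. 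Since Conjecture~\ref{conj:1} is itself open, however, the substance of a genuine proof must lie elsewhere, and I would instead argue directly with the log-growth filtration.

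The unconditional plan is as follows. First I would attach to $D$ (resp.\ $D_t$) its $\mu$-dimensional space of analytic solutions $V$ (resp.\ $V_t$) and equip it with the log-growth filtration $V^{(\lambda)}\subseteq V$ consisting of solutions of log-growth order $\le\lambda$; by Dwork's bound the relevant thresholds satisfy $\lambda\le\mu-1$, so only finitely many jumps occur. The polygon $\mathrm{NP}_{\log,0}(D)$ is read off from the jump dimensions $\dim V^{(\lambda)}/V^{(<\lambda)}$, and similarly for $\mathrm{NP}_{\log,t}(D_t)$. The key point is that the assertion ``$\mathrm{NP}_{\log,0}(D)$ is above $\mathrm{NP}_{\log,t}(D_t)$'' follows from the family of inequalities
\[
\dim V^{(\lambda)}\le\dim V_t^{(\lambda)}\qquad\text{for every }\lambda,
\]
that is, from the assertion that the special disc carries no more low-log-growth solutions than the generic disc (equivalently, that degeneration at $0$ can only increase the number of badly growing solutions). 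Indeed these inequalities force the sorted special log-growth orders to dominate the generic ones term by term, which is exactly the statement that the special polygon lies above the generic one.

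The heart of the matter is therefore a semicontinuity statement for the log-growth filtration as the centre of the disc moves from the generic point $t$ to $0$. I would set this up by working with a single fundamental solution matrix $Y$ regarded as depending on the two variables $(X,t)$, comparing the Gaussian-norm growth estimates $|Y|_0(r)$ and $|Y|_t(r)$ through the transfer matrix between the two discs, whose entries are controlled by the boundedness of the coefficients $f_i$. The aim is to bound $\dim V^{(\lambda)}$ by $\dim V_t^{(\lambda)}$, for instance by producing an injection $V^{(\lambda)}\hookrightarrow V_t^{(\lambda)}$, which gives the displayed inequalities.

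The main obstacle is exactly this semicontinuity: log-growth orders are delicate real invariants of the full asymptotic behaviour of the solutions, not locally constant algebraic data, so one cannot invoke a black-box specialization theorem and must control the growth estimates uniformly in the parameter $t$ as $t\to 0$. I expect the cleanest way to carry this out is to interpose the Frobenius structure after all, using it not to identify the two polygons (which would require Conjecture~\ref{conj:1}) but only to bound the log-growth filtration in terms of the better-behaved Frobenius slope filtration in a direction that survives specialization; managing the interaction between the variable log-growth data and the Frobenius data in the limit $t\to 0$ is where I anticipate the real work.
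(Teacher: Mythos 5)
First, note that the paper itself offers no proof of this statement: it is Dwork's conjecture, recorded in the introduction for context. The paper only observes (i) that it would follow from Conjecture~\ref{conj:1} together with Grothendieck's specialization theorem for $F$-isocrystals, and (ii) that it was proved unconditionally by Andr\'e in \cite[Theorem~4.1.1]{And}, after fixing the right endpoints of both polygons at $(\mu,0)$ to make ``above'' meaningful. Your conditional route is exactly observation (i); it is fine as far as it goes, but it rests on Conjecture~\ref{conj:1}, which is still open at the special point (Chiarellotto--Tsuzuki prove it only for $\mu\le 2$ or under the HPBQ hypothesis), so it cannot serve as a proof.

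Your unconditional route contains a concrete error that would make it fail: the key inequality $\dim V^{(\lambda)}\le\dim V_t^{(\lambda)}$ points the wrong way, and is in fact refuted by the main theorem of this very paper. For $M=M_{\sigma,\sigma'}$ with $0\le\sigma'<\sigma<1$, the special solution $y_s$ has log-growth exactly $1-\sigma$ while the generic solution $y_g$ has log-growth exactly $1-\sigma'$, which is strictly larger; hence for $\lambda\in[1-\sigma,1-\sigma')$ one has $\dim V^{(\lambda)}=2>1=\dim V_t^{(\lambda)}$. With both polygons anchored at the right endpoint $(\mu,0)$, the heights are negatives of partial sums of slopes, so ``$\mathrm{NP}_{\log,0}(D)$ above $\mathrm{NP}_{\log,t}(D_t)$'' means the special log-growth orders are, in the partial-sum sense, \emph{smaller} than the generic ones: specialization improves log-growth rather than worsening it. Your guiding intuition (``degeneration at $0$ can only increase the number of badly growing solutions'') is therefore backwards, and the injection $V^{(\lambda)}\hookrightarrow V_t^{(\lambda)}$ you aim to construct does not exist in general; what is needed is a comparison in the opposite direction. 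Beyond this, the remainder of the sketch (transfer matrices, uniform Gaussian-norm estimates in $t$) names the analytic difficulty but does not resolve it, and the final suggestion to route the argument through a Frobenius structure cannot work as stated, since the statement is supposed to hold without assuming a Frobenius structure exists --- indeed the module $M_{\sigma,\sigma'}$ of this paper admits none. Finally, any correct formulation must fix the endpoint convention explicitly, since the paper's main theorem shows that the left endpoints genuinely move under specialization.
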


Dwork stated these conjectures vaguely and even the precise formulations of the conjectures were given only recently: Conjecture~\ref{conj:1} was formulated accurately by B.~Chiarellotto and N.~Tsuzuki (\cite[Conjectures~6.8, 6.9]{CT}) and they proved their conjecture in the special cases where $\mu$ is $\le 2$ (Theorem~7.1 (2) loc. cit.), or where $D$ satisfies a certain technical condition called HPBQ (\cite[Theorem~6.5]{CT2}). The conjecture in the generic case was proved without any assumption \cite[Theorem~7.1]{CT2}. Conjecture~\ref{conj:2} was proved by Y.~Andr\'e in the following form: Since Dwork did not fix the endpoints of the log-growth Newton polygons, he first fixed the right endpoints of the log-growth Newton polygons at $(\mu,0)$. Under this convention, he proved that $\mathrm{NP}_{\log,0}(D)$ is above $\mathrm{NP}_{\log,t}(D_t)$ without assuming the existence of a Frobenius structure on $Dy=0$ (\cite[Theorem~4.1.1]{And}). Then, Andr\'e asked whether or not the left endpoint of the log-growth Newton polygon is stable under specialization. In any known example of $p$-adic differential equations at that point, the left endpoints of the special and generic Newton polygons coincide with each other. We also note that if $Dy=0$ admits a Frobenius structure and Chiarellotto-Tsuzuki's conjecture (in the special case) is true for $Dy=0$, then the left endpoints of the special and generic Newton polygons coincide with each other (\cite[Theorem~8.1]{CT2}).


The aim of this paper is to answer Andr\'e's question ``negatively'': We will explicitly construct a $\nabla$-module $M$ of rank $2$ such that the left endpoint of the special log-growth Newton polygon of $M$ is strictly above the left endpoint of the generic log-growth Newton polygon of $M$. Also, we prove that our example does not admit a Frobenius structure. Hence, the existence of our example means that equality of the endpoints is apparently a special feature of log-growth in the presence of a Frobenius structure.



\subsection*{Acknowledgement}

The author thanks Professor Nobuo Tsuzuki for answering many questions on log-growth. He also pointed out a possibility of a negative answer to Andr\'e's question to the author. The author thanks the referee for detailed comments. The author is supported by Research Fellowships of Japan Society for the Promotion of Science for Young Scientists.

\section{Construction of a $p$-adic differential equation}
We first recall some notation in \cite{And}.

\begin{notation}
Let $p$ be a prime number. Let $v_p:\Q_p\to\Z\cup\{\infty\}$ be a discrete valuation such that $v_p(p)=1$. We define a norm $|\cdot|_p:\Q_p\to\R_{\ge 0}$ by $|x|_p=p^{-v_p(x)}$. For a complete valuation field $K$ with integer ring $\mathcal{O}_K$, let $K[[X]]_0:=\mathcal{O}_K[[X]][p^{-1}]$. We denote by $t$ a formal variable, i.e., a Dwork generic point and let $\Q_p\{\{t\}\}$ be the fraction field of the $p$-adic completion of $\Z_p[[t]][t^{-1}]$. For $x\in\R$, let $\fl{x}:=\max\{n\in\Z;n\le x\}$. Let $M$ be a $\nabla$-module $M$ over $\Q_p[[X]]_0$ (\cite[\S~0.3]{CT}). We denote $M_{\bar{t}}:=M\otimes_{\Q_p[[X]]_0}\Q_p\{\{t\}\}[[X-t]]_0$. We denote the special log-growth Newton polygon of $M$ by $\mathrm{NP}_{\log,\bar{0}}(M)$ (\cite[\S~3.3]{And}). We also denote the generic log-growth Newton polygon of $M$ by $\mathrm{NP}_{\log,\bar{t}}(M_{\bar{t}})$ \cite[\S~3.4]{And}.
\end{notation}

\begin{dfn}
Let $\sigma\in\R_{\ge 0}$. We define $P_{\sigma}\subset\mathbb{R}^2$ as the lower convex polygon defined by the vertices $(0,-\sigma)$, $(1,-\sigma)$, and $(2,0)$. Obviously, the slope set of $P_{\sigma}$ is $\{0,\sigma\}$.
\end{dfn}

\begin{thm}\label{thm:main}
Let $\sigma,\sigma'$ be real numbers satisfying $0\le \sigma'<\sigma<1$. Then, there exists a $\nabla$-module $M=M_{\sigma,\sigma'}$ over $\Q_p[[X]]_0$ of rank~$2$ such that
\begin{equation*}
\mathrm{NP}_{\log,\bar{0}}(M)=P_{1-\sigma},\ \mathrm{NP}_{\log,\bar{t}}(M_{\bar{t}})=P_{1-\sigma'}.
\end{equation*}
In particular, the left endpoint of $\mathrm{NP}_{\log,\bar{0}}(M)$ is strictly above the left endpoint of $\mathrm{NP}_{\log,\bar{t}}(M_{\bar{t}})$.
\end{thm}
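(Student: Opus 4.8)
The plan is to realize $M$ as a self-extension of the trivial $\nabla$-module, i.e. as the rank-$2$ module whose connection matrix with respect to $d/dX$ is $\begin{pmatrix} 0 & g \\ 0 & 0\end{pmatrix}$ for a single carefully chosen bounded series $g=\sum_{n\ge 0} b_n X^n\in\Q_p[[X]]_0$ (so $|b_n|_p\le 1$). The horizontal sections are then spanned by the constant section and by a section whose nontrivial component is a primitive $h=\int_0^X g$. The constant section accounts for the slope-$0$ segment (one bounded solution, both specially and generically), while the growth of $h$ accounts for the second slope. Since $0\le\sigma'<\sigma<1$ forces the nonzero slopes $1-\sigma,\,1-\sigma'$ to lie in $(0,1]$, the problem reduces to showing that $h$ has log-growth exactly $1-\sigma$ as a power series centered at $0$ and exactly $1-\sigma'$ after pulling back to the Dwork generic point, i.e. as a power series in $X-t$.

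The key point is that these two log-growths are computed by two genuinely different functionals of $(b_n)$. Centered at $0$, the coefficient of $X^{n+1}$ in $h$ is $b_n/(n+1)$, of absolute value $|b_n|_p\,p^{v_p(n+1)}$, so the special log-growth is the ``diagonal'' quantity $\limsup_n (v_p(n+1)-v_p(b_n))/\log_p(n+1)$. Centered at $t$, the coefficient of $(X-t)^k$ is $c_{k-1}(t)/k$, where $c_j(t)$ is the $j$-th Taylor coefficient of $g$ at $t$; because $t$ is generic, the Gauss norm computes $|c_j(t)|_p=\sup_{n\ge j}|b_n|_p\,|\binom{n}{j}|_p$ with no cancellation, so the generic log-growth is a ``supremum'' quantity governed by $\sup_{n\ge k-1}|b_n|_p|\binom{n}{k-1}|_p$ together with the factor $p^{v_p(k)}$. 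Since the generic functional is a supremum over all $n$ whereas the special one sees only the diagonal term $n=k-1$, the former always dominates the latter (consistent with Andr\'e's inequality), and the freedom is to make it strictly larger. I would therefore build $g$ lacunary: a ``diagonal'' part supported on exponents $p^m-1$ with $v_p(b_{p^m-1})\approx m\sigma$, which by itself produces special \emph{and} generic log-growth $1-\sigma$; then I superimpose a second lacunary part supported on exponents $N$ with $N+1$ prime to $p$, whose valuations are tuned so that the induced $\sup_{n}|b_n|_p|\binom{n}{k-1}|_p$ decays only like $k^{-\sigma'}$. The second part contributes nothing to the special diagonal growth (there is no $p$-power amplification at such exponents, so its coefficients stay bounded), but through the binomial coefficients it raises the generic growth from $1-\sigma$ to $1-\sigma'$.

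Concretely I would then carry out the two estimates. The special estimate is immediate: evaluating the diagonal $\limsup$ along $n+1=p^m$ gives $1-\sigma$, and checking that the remaining exponents contribute no more yields $\mathrm{NP}_{\log,\bar 0}(M)=P_{1-\sigma}$. The generic estimate is where the work lies: one computes $v_p\binom{n}{k-1}$ by Kummer's theorem, exhibits for each $k$ (in particular along $k=p^{m'}$) an exponent $n$ in the support of $g$ for which $|b_n|_p|\binom{n}{k-1}|_p$ attains the prescribed size $\approx k^{-\sigma'}$, and shows that no $n$ does better; combined with the factor $p^{v_p(k)}$ this gives generic log-growth $1-\sigma'$, hence $\mathrm{NP}_{\log,\bar t}(M_{\bar t})=P_{1-\sigma'}$. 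Comparing the left endpoints $(0,\sigma-1)$ and $(0,\sigma'-1)$ and using $\sigma>\sigma'$ finishes the proof.

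The main obstacle is precisely this generic computation: one must design the lacunary supports and valuations so that the Gauss-norm supremum is realized by an explicit family of binomial coefficients, is cleanly separated from the special diagonal growth, and interpolates to an arbitrary real target $1-\sigma'$ (using $\lfloor\cdot\rfloor$ of multiples of $\sigma$ and $\sigma'$ to approximate irrational values). A secondary point is to confirm that $g\in\Q_p[[X]]_0$ and that $M$ is a genuine $\nabla$-module with exactly a one-dimensional space of bounded solutions; but the hardest quantitative input remains the matching lower and upper bounds pinning the generic log-growth to $1-\sigma'$.
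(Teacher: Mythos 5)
Your overall framework coincides with the paper's: a rank-$2$ unipotent extension whose connection matrix is $\begin{pmatrix}0 & g\\ 0 & 0\end{pmatrix}$, the reduction to computing the log-growth of a primitive of a lacunary bounded series at the special and at the generic point, the Gauss-norm (no-cancellation) description of the generic Taylor coefficients, and Kummer-type control of $v_p\binom{n}{k-1}$. However, the concrete lacunary design you sketch --- which is exactly the step you flag as ``where the work lies'' --- is not merely incomplete: it provably cannot work.

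The failure is in your second lacunary part, supported on exponents $N$ with $N+1$ prime to $p$. You want it to raise the generic growth to $1-\sigma'$ through the supremum $\sup_N |b_N|_p\,|\binom{N}{k-1}|_p$, amplified by the factor $p^{v_p(k)}$. But Kummer's theorem kills this: write $v=v_p(k)\ge 1$; then $k-1\equiv -1\pmod{p^v}$, so the lowest $v$ base-$p$ digits of $k-1$ all equal $p-1$. If $N+1$ is prime to $p$, the lowest digit of $N$ is \emph{not} $p-1$, so adding $(k-1)+(N-k+1)=N$ forces a carry in position $0$, and that carry propagates through positions $1,\dots,v-1$ (where the digit of $k-1$ is $p-1$). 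Hence $v_p\binom{N}{k-1}\ge v_p(k)$ for every such $N$ and every $k$, so
\[
p^{v_p(k)}\,|b_N|_p\,\Bigl|\binom{N}{k-1}\Bigr|_p\le p^{v_p(k)}\cdot 1\cdot p^{-v_p(k)}=1 .
\]
Thus your second part contributes only $O(1)$ to every generic coefficient and can never create generic log-growth; your $h$ would have generic growth $1-\sigma$ (coming from the first, diagonal part), not $1-\sigma'$. The obstruction is structural: to be amplified at the generic point at $k=p^{m'}$, an exponent $N$ must satisfy $N\equiv -1\pmod{p^{m'}}$, but then the special coefficient $b_N/(N+1)$ is itself amplified by $p^{v_p(N+1)}\ge p^{m'}$, so suppressing the special growth cannot be done by taking $N+1$ prime to $p$. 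The paper resolves this tension with a \emph{single} family of exponents $n+1=p^{r}(p^{\fl{\delta r}+1}+1)$, where $\delta=(\sigma-\sigma')/(1-\sigma)$: the valuation $v_p(n+1)=r$ makes the coefficient visible generically at the much smaller exponent $p^{r}$ via the \emph{unit} binomial $\binom{n}{p^r-1}$ (Lemma~\ref{lem:binom}), giving generic growth $1-\sigma'$, while the size $n\approx p^{(1+\delta)r}$ dilutes the special growth exponent to $(1-\sigma')/(1+\delta)=1-\sigma$. This inflation of the size of $n$ relative to $v_p(n+1)$ is the essential idea missing from your construction.
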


In the following, we will construct $M=M_{\sigma,\sigma'}$. Denote $\delta:=(\sigma-\sigma')/(1-\sigma)\in\R_{\ge 0}$. For $n\in\N$, we put
\[
a_n:=\begin{cases}
p^{\lfloor\sigma' r\rfloor}&\text{if }n=p^r(p^{\lfloor \delta r\rfloor+1}+1)-1\text{ for some }r\in\N,\\
0&\text{otherwise}
\end{cases}
\]
and
\[
f:=\sum_{n\in\N}a_nX^n=X^{p}+\dots\in\Z_p[[X]].
\]
We define $M:=\Q_p[[X]]_0e_1\oplus\Q_p[[X]]_0e_2$ endowed with an action of $d/dX$ given by
\[
\nabla\left(\frac{d}{dX}\right)
\begin{pmatrix}
e_1&e_2
\end{pmatrix}=\begin{pmatrix}
e_1&e_2
\end{pmatrix}
\begin{pmatrix}
0&-f\\
0&0
\end{pmatrix}.
\]
We put
\[
y_{s}:=\sum_{n\in\N}\frac{1}{n+1}a_{n+1}X^{n+1}\in\Q_p[[X]],
\]
\[
y_{g}:=\sum_{n\in\N}\frac{(X-t)^{n+1}}{n+1}\sum_{k\ge n}^{\infty}a_kt^{k-n}\binom{k}{n}\in\Q_p\{\{t\}\}[[X-t]].
\]
Then, the space of the horizontal sections of $M_{\Q_p[[X]]}$ and $M_{\Q_p\{\{t\}\}[[X-t]]}$ admit basis $\{e_1,y_se_1+e_2\}$ and $\{e_1,y_ge_1+e_2\}$. Therefore, to prove Theorem~\ref{thm:main}, we have only to prove that
\[
y_s\text{ is exactly of log-growth }1-\sigma,
\]
\[
y_g\text{ is exactly of log-growth }1-\sigma'.
\]

\begin{rem}
Our example $M_{\sigma,\sigma'}$ with $\sigma=\sigma'$ coincides with \cite[Example~5.3]{CT}. Also, $M_{\sigma,\sigma'}$ with $\sigma\neq\sigma'$ does not admit a Frobenius structure. In fact, if $M_{\sigma,\sigma'}$ admits a Frobenius structure, then the left endpoints of the special and generic log-growth Newton polygons coincide with each other by \cite[Theorem~7.1~(2)]{CT} and \cite[Theorem~8.1]{CT2}, which contradicts to Theorem~\ref{thm:main}.
\end{rem}

\subsection{Calculation of the log-growth of $y_s$}

We estimate $a_n/(n+1)$, which is the coefficient of $y_s$ at $X^{n+1}$, as follows. Let $\lambda\in\R_{\ge 0}$ and $n=p^r(p^{\fl{\delta r}+1}+1)-1$ for some $r\in\N$. Then, we have
\[
\left|\frac{a_n}{n+1}\right|_p/(n+1)^{\lambda}=p^{-\fl{\sigma' r}+r-(r+\fl{\delta r}+1)\lambda}/(1+p^{-\fl{\delta r}-1})^{\lambda}.
\]
When $(1-\sigma')/(1+\delta)(=1-\sigma)\le\lambda$, we have
\begin{align*}
&-\fl{\sigma' r}+r-(r+\fl{\delta r}+1)\lambda\\
\le&-\sigma' r+1+r-(r+\delta r)\lambda\\
\le&-\sigma' r+1+r-(r+\delta r)\left(\frac{1-\sigma'}{1+\delta}\right)=1.
\end{align*}
When $(1-\sigma')/(1+\delta)>\lambda$, we have
\begin{align*}
&-\fl{\sigma' r}+r-(r+\fl{\delta r}+1)\lambda\\
\ge&-\sigma' r+r-(r+\delta r+1)\lambda\\
=&r(-\sigma'+1-(1+\delta)\lambda)-\lambda\\
=&r(1+\delta)\left(\frac{1-\sigma'}{1+\delta}-\lambda\right)-\lambda,
\end{align*}
where the last term tends to $\infty$ as $n\to\infty$. Hence, we have
\[
\left|\frac{a_n}{n+1}\right|_p/(n+1)^{\lambda}
\begin{cases}
=O(1)\text{ as }n\to\infty&\text{if }1-\sigma\le\lambda\\
\to\infty\text{ as }n\to\infty&\text{if }1-\sigma>\lambda,
\end{cases}
\]
which implies that $y_s$ is exactly of log-growth $1-\sigma$.

\subsection{Calculation of the log-growth of $y_g$}
First, we prove that $y_g$ is not of log-growth $\lambda$ for any $\lambda\in [0,1-\sigma')$. We will use the following lemma:
\begin{lem}\label{lem:binom}
Let $u\in\Z_p$ and $0\le r\le s\in\N$. Then, we have
\[
\binom{p^su-1}{p^r-1}\in\Z_p^{\times}.
\]
\end{lem}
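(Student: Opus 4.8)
The plan is to compute the $p$-adic valuation of $\binom{p^su-1}{p^r-1}$ directly and show it equals zero, which gives both integrality and the unit property at once. Since $u\in\Z_p$, the quantity $p^su-1$ lies in $\Z_p$, and I interpret the binomial coefficient through the polynomial identity
\[
\binom{p^su-1}{p^r-1}=\frac{\prod_{j=1}^{p^r-1}(p^su-j)}{(p^r-1)!},
\]
which is legitimate because the integer-valued polynomial $N\mapsto\binom{N}{p^r-1}$ extends continuously to $\Z_p$. Thus it suffices to prove that the numerator and the denominator have the same $p$-adic valuation.

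The key step is the factorwise computation $v_p(p^su-j)=v_p(j)$ for each $j$ in the range $1\le j\le p^r-1$. Here the hypothesis $s\ge r$ is essential: for such $j$ we have $v_p(j)\le r-1<s$, while $v_p(p^su)\ge s$ since $u\in\Z_p$. By the ultrametric inequality, $v_p(p^su-j)=\min(v_p(p^su),v_p(j))=v_p(j)$, with no cancellation. Summing over $j$ then yields
\[
v_p\!\left(\prod_{j=1}^{p^r-1}(p^su-j)\right)=\sum_{j=1}^{p^r-1}v_p(j)=v_p\bigl((p^r-1)!\bigr).
\]

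Combining these, the valuation of $\binom{p^su-1}{p^r-1}$ is $v_p((p^r-1)!)-v_p((p^r-1)!)=0$, so the binomial coefficient is a $p$-adic unit, as claimed. The only delicate point is the valuation bound in the second step, namely ensuring that every factor $p^su-j$ picks up exactly the valuation of $j$; this is precisely where the assumption $s\ge r$ enters, since it guarantees $v_p(j)<s\le v_p(p^su)$ and thereby prevents any carry-type cancellation. Everything else is routine. An alternative route via Kummer's theorem on carries in base-$p$ addition is available, but the direct valuation count is cleaner here because it accommodates the $p$-adic parameter $u$ without reducing to an honest integer.
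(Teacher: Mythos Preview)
Your proof is correct. The factorwise valuation identity $v_p(p^su-j)=v_p(j)$ for $1\le j\le p^r-1$ is exactly right, and the edge case $r=0$ is handled vacuously by the empty product.

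The paper takes a different route: it works with the generating series $(1+X)^{p^su-1}$ in $\F_p[[X]]/X^{p^r}$, rewrites it as $(1+X)^{p^su}(1+X)^{-1}$, observes that $(1+X)^{p^su}\equiv (1+X^{p^s})^u\equiv 1$ modulo $(p,X^{p^r})$ because $s\ge r$, and reads off the coefficient of $X^{p^r-1}$ in $(1+X)^{-1}=\sum_i(-X)^i$ as $(-1)^{p^r-1}\not\equiv 0$. So the paper actually identifies the residue class modulo $p$, not just the valuation. Your argument is more self-contained and avoids any discussion of what $(1+X)^\alpha$ means for $\alpha\in\Z_p$; the paper's argument is slicker once one accepts that formalism and yields slightly more (the explicit residue), though the extra information is not used anywhere. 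Both proofs use the hypothesis $s\ge r$ at the analogous spot: you to force $v_p(p^su)>v_p(j)$, the paper to kill $X^{p^s}$ modulo $X^{p^r}$.
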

\begin{proof}
In $\F_p[[X]]/X^{p^r}\F_p[[X]]$, we have
\[
(1+X)^{p^su-1}=(1+X)^{p^su}(1+X)^{-1}=(1+X^{p^s})^u(1+X)^{-1}=(1+X)^{-1}=\sum_{i\in\N}(-X)^i.
\]
Hence, we have $\binom{p^su-1}{p^r-1}\equiv (-1)^{p^r-1}\neq 0$ in $\F_p$, which implies the assertion.
\end{proof}
When $n=p^r-1$ for some $r\in\N$, the coefficient of $y_g$ at $(X-t)^{n+1}t^{p^{r+\fl{\delta r}+1}}$ is equal to
\begin{equation}\label{eq:coeff}
\frac{1}{n+1}a_{p^r(p^{\fl{\delta r}+1}+1)-1}\binom{p^r(p^{\fl{\delta r}+1}+1)-1}{n}.
\end{equation}
By Lemma~\ref{lem:binom}, we have $\binom{p^r(p^{\fl{\delta r}+1}+1)-1}{n}\in\Z_p^{\times}$. We also have
\[
\left|\frac{a_{p^r(p^{\fl{\delta r}+1}+1)-1}}{n+1}\right|_p/(n+1)^{\lambda}=p^{-\fl{\sigma' r}+r-r\lambda}\ge p^{-\sigma' r+r-r\lambda}=p^{r(-\sigma'+1-\lambda)}.
\]
Since $-\sigma'+1-\lambda>0$ by assumption, $(\ref{eq:coeff})$ tends to $\infty$ as $n\to\infty$, which implies the assertion.

Finally, we prove that $y_g$ is of log-growth $1-\sigma'$. Put $\lambda:=1-\sigma'$. For $n\in\N$, we define
\begin{align*}
S^+(n)&:=\{k\ge n;k=p^r(p^{\fl{\delta r}+1}+1)-1\text{ for some }r\in\N_{\ge v_p(n+1)}\},\\
S^-(n)&:=\{k\ge n;k=p^r(p^{\fl{\delta r}+1}+1)-1\text{ for some }r\in\N_{<v_p(n+1)}\}.
\end{align*}
In the following, we fix $n\in\N$ and estimate the Gaussian norm of
\begin{equation}\label{eq:generic}
\sum_{k\in S^+(n)\coprod S^-(n)}\frac{a_k}{n+1}\binom{k}{n}t^{k-n}\in\Q_p\{\{t\}\},
\end{equation}
which is the coefficient of $y_g$ at $X^{n+1}$.

Case~$1$: $k=p^{r}(p^{\fl{\delta r}+1}+1)-1\in S^+(n)$.

We have
\[
\left|\frac{a_k}{n+1}\binom{k}{n}\right|_p/(n+1)^{\lambda}\le\left|\frac{a_k}{n+1}\right|_p/(n+1)^{\lambda}\le p^{-\fl{\sigma' r}+v_p(n+1)-v_p(n+1)\lambda}\le p^{-\sigma' r+1+\sigma'v_p(n+1)}\le p.
\]

Case~$2$: $k=p^{r}(p^{\fl{\delta r}+1}+1)-1\in S^-(n)$.

Since $(n+1)^{-1}\binom{k}{n}=(k+1)^{-1}\binom{k+1}{n+1}$, we have
\[
\left|\frac{a_k}{n+1}\binom{k}{n}\right|_p/(n+1)^{\lambda}\le \left|\frac{a_k}{k+1}\right|_p/(n+1)^{\lambda} \le p^{-\fl{\sigma' r}+r-v_p(n+1)\lambda}< p^{-\sigma' r+1+r-r\lambda}=p.
\]
Hence, $(\ref{eq:generic})=O((n+1)^{1-\sigma'})$ as $n\to\infty$, which implies that $y_g$ is of log-growth $1-\sigma'$.

\end{document}